\newcommand{\Norm}{\mathrm{Norm}}
\newcommand{\li}{\mathrm{li}}
\newcommand{\as}{\quad\text{as}\quad}
\newcommand{\tinf}{\to\infty}
\newcommand{\nt}{\notag}
\newcommand{\bC}{\mathbb{C}}
\newcommand{\bR}{\mathbb{R}}
\newcommand{\noi}{\noindent}
\renewcommand{\Re}{\mathrm{Re}}
\renewcommand{\Im}{\mathrm{Im}}
\newcommand{\divset}{\hspace{3pt}|\hspace{3pt}}
\newcommand{\bigdivset}{\hspace{3pt}\big|\hspace{3pt}}
\newtheorem{thm}{Theorem}[section]
\newtheorem{lem}[thm]{Lemma}
\newtheorem{cor}[thm]{Corollary}
\numberwithin{equation}{section}
\title{Prime and zero distributions for meromorphic Euler products}
\author{Yasufumi Hashimoto}
\date{}
\begin{document}
\markboth
{Y. Hashimoto}
{Prime and zero distributions}
\pagestyle{myheadings}

\maketitle

\begin{abstract}
The aim of the present paper is to study the relations between the prime distribution 
and the zero distribution for generalized zeta functions  
which are expressed by an Euler products and are analytically continued as 
meromorphic functions of finite order. 
In this paper, we give an inequality between the order of the zeta function as a meromorphic function 
and the growth of the multiplicity in the prime distribution. 
\end{abstract}

\section{Introduction}

The aim of the present paper is to study the relation between the distributions of 
``primes" and ``zeros" for generalized meromorphic zeta functions expressed by the Euler products. 

For the Riemann zeta function, the zero distribution problems such as the Riemann hypothesis, 
the multiplicity-one problem and the GUE conjecture, and the prime distribution problems such as 
estimating the error terms of the prime number theorem, the twin prime problem,  
and the prime distribution in short intervals have been studied by many mathematicians in long time 
(see, e.g. \cite{Ed}, \cite{Iv} and \cite{Ti}). 
As well-known, there are deep relations between the zero distribution and the prime distribution; 
in fact, the best possible estimate of error term 
of the prime number theorem ($O(x^{1/2+\epsilon})$) would be obtained if the Riemann hypothesis 
could be proved positively. 

On the other hand, for the Selberg zeta function defined by the Euler product over the length 
of the primitive closed geodesics on a volume finite Riemann surface (or a higher dimensional 
negatively curved locally symmetric Riemannian manifold), such problems are described 
as geometric and spectral problems because the ``primes" for Selberg's zeta function 
are the primitive closed geodesics and the non-trivial zeros are written by the eigenvalues of the Laplacian 
on the corresponding Riemann surface. 
The relation between the ``primes" and ``zeros" in this case is described by the Selberg trace formula, 
and its relation gives the analytic continuation of the Selberg zeta function. 

There are some common properties between Riemann's and Selberg's zeta functions 
such like analytic continuations to the whole complex plane plane as meromorphic functions, 
functional equations and so on. 
However, the prime and zero distributions for Selberg's zeta function 
are different to those for Riemann's zeta function, 
and the difference sometimes causes technical difficulties when one analyzes these distributions. 

One of such differences is that the order of Riemann's zeta function is one 
but that of Selberg's zeta function is two (or the dimension of the corresponding manifold). 
While the Riemann hypothesis for the Selberg zeta function almost holds, 
the best possible estimate of the error terms of the prime geodesic theorem, 
the prime number theorem of Selberg's zeta version, have been never obtained 
because the order is two. 

Another difference is in the prime distributions. 
Since the Riemann zeta function is written by the Euler product over prime numbers with multiplicity one, 
it has the Dirichlet series expression over integers and also the integral expression. 
Many analytic number theorists have used these expressions 
to analyze the prime and zero distributions. 
However, the Selberg zeta function is written by the Euler product 
over non-integer (and non-rational) values with unbounded 
multiplicities (see \cite{Ra}).
Then any good Dirichlet series expressions and integral expressions of it have never been found.

\begin{center}
\begin{tabular}{|c||c|c|} 
\hline
 & Riemann's zeta  & Selberg's zeta \\
\hline
prime distributions & multiplicity one & unbounded multiplicity \\
\hline
order of zeta (zero distrib.) & one & two \\
\hline
\end{tabular}
\end{center}

In the present paper, we study the relations between the prime and zero distributions 
for general zeta functions which are defined by the Euler product and 
have meromorphic continuations to the whole complex planes, 
in the view of such differences between the Riemann zeta function and the Selberg zeta function. 
Actually, we obtain the inequality between the order of the zeta function and 
the multiplicity in the prime distributions (Theorem \ref{thm}), which shows that the order increases 
as (the average of) the growth of the multiplicity does. 
As corollaries of the theorem, we give some properties of the zeta functions in Corollary \ref{cor} 
associated with the critical strips.

\section{Preliminaries and the main results}

Let $P$ be an infinite countable set and $N:P\to\bR_{>1}$ a map satisfying 
$\sum_{p\in P}N(p)^{-a}<\infty$ for some $a>0$. 
Assume that $a_P:=\inf\{a>0\divset \sum_{p\in P}N(p)^{-a}<\infty\}$ is positive 
and normalize $N$ as $a_P=1$. 
Define the zeta function of $P$ by 
\begin{align}\label{zeta}
\zeta_P(s):=\prod_{p\in P}(1-N(p)^{-s})^{-1}\quad \Re{s}>1,
\end{align}
and assume that   
(i) $\zeta_P(s)$ is non-zero holomorphic in $\{\Re{s}\geq 1\}$ without a simple pole at $s=1$, and 
(ii) $\zeta_P(s)$ can be analytically continued to the whole complex plane $\bC$ 
as a meromorphic function of finite order $d\geq0$. 

The assumption (i) implies that 
\begin{align}
\#\{p\in P\divset N(p)<x\}\sim \li(x)\as x\tinf,
\end{align}
where $\li(x):=\int_2^{x}(\log{t})^{-1}dt$ (see, e.g., \cite{Ku}).  
This formula is interpreted as a generalization of the prime number theorem. 
When we denote by $\Norm(P):=\{N(p)\divset p\in P\}$ 
and $m(N)$ the number of $p\in P$ with the norm $N(p)=N$ for $N\in\Norm(P)$,
the zeta function and the prime number theorem are written by
\begin{align}
&\zeta_P(s)=\prod_{N\in\Norm(P)}(1-N^{-s})^{-m(N)}\quad \Re{s}>1,\\
&\sum_{\begin{subarray}{c}N\in \Norm(P)\\ N<x\end{subarray}}m(N)\sim\li(x)\as x\tinf.
\end{align} 

On the other hand, the assumption (ii) implies that 
\begin{align}\label{zero}
T^{d-\epsilon} \ll \#\{\sigma\in\Lambda_P\divset |\sigma|<T\}\ll T^{d+\epsilon},
\end{align}
for any $\epsilon>0$. 
Here $\Lambda_P$ is the set of singular points of $\zeta_P(s)$ 
and the number of singular points above is counted with multiplicities.  
Put the number $d_1\in[0,d]$ such that 
\begin{align}\label{zero1}
\#\{\sigma\in\Lambda_P\divset T-1<|\sigma|<T+1\}\ll T^{d_1+\epsilon}.
\end{align} 

When $P$ is the set of rational prime numbers and $N(p)=p$, 
the zeta function $\zeta_P(s)$ is the Riemann zeta function 
\begin{align*}
\zeta(s)=\prod_{p}(1-p^{-s})^{-1}\quad \Re{s}>1,
\end{align*}
which satisfies the assumptions (i) and (ii) with $d=1$ and $d_1=0$ (see, e.g. \cite{Ti}). 
It is easy to see that $\Norm(P)=P$ and $m(p)=1$ for any $p\in P$.

On the other hand, when $P$ is the set of primitive hyperbolic conjugacy classes 
of a discrete subgroup $\Gamma$
of $\mathrm{SL}_2(\bR)$ which is the fundamental group of a volume finite Riemann surface 
with the hyperbolic metric 
and $N(p)$ is the square of the larger eigenvalue of $p$, 
the zeta function $\zeta_P(s)$ is the Selberg (Ruelle) zeta function. 
By virtue of Selberg's trace formula, we see that the Selberg zeta function satisfies 
the assumptions (i) and (ii) with $d=2$ and $d_1=1$ (see, e.g. \cite{He}). 
Also, it is known that $m(N)$ is unbounded (\cite{Ra}) 
and, furthermore for arithmetic $\Gamma$, 
it has been considered that the asymptotic distribution of $m(N)$ 
is close to $\li(N^{1/2})$ (see \cite{Sc} and \cite{H}). 

\begin{center}
\begin{tabular}{|c||c|c|} 
\hline
 & Riemann's zeta  & Selberg's zeta \\
\hline
m(N) & 1 (bounded) & unbounded \\
\hline
order & $d=1$ ($d_1=0$) & $d=2$ ($d_1=1$) \\
\hline
\end{tabular}
\end{center}

The main result in this paper is as follows.
\begin{thm}\label{thm}
Let $\rho\leq1$ and $1\leq \alpha\leq2$ be numbers respectively satisfying 
that $\zeta_P(s)$ has at most finite number of singular points in $\{\Re{s}\geq\rho\}$ 
and that 
\begin{align*}
x^{\alpha-\epsilon}\ll \sum_{\begin{subarray}{c}N\in\Norm(P)\\N<x\end{subarray}}m(N)^2\ll x^{\alpha+\epsilon}.
\end{align*}
Then we have 
\begin{align}
d+d_1\geq \max\Big\{1,\frac{2-2\rho}{2-\alpha}\Big\}.
\end{align}
\end{thm}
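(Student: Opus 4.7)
Write $F(s) := -\zeta_P'(s)/\zeta_P(s)$. For $\Re s > 1$ one has the Dirichlet expansion $F(s) = \sum_n \Lambda_P(n) n^{-s}$ with $\Lambda_P(N(p)^k) := \log N(p)$, so $\Lambda_P(N) = m(N)\log N$ for $N \in \Norm(P)$ and the second-moment hypothesis translates into
\[
\sum_{n \leq x} \Lambda_P(n)^2 \gg x^{\alpha - \epsilon}.
\]
My plan is to bound the vertical mean square $I(T,\sigma) := \int_{-T}^T |F(\sigma + it)|^2\, dt$ in two ways---from below using the coefficients, and from above using the order---and then balance the resulting inequalities.

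For the \emph{lower bound}, I would start from the Plancherel identity $I(T,\sigma)/(2T) \to \sum_n \Lambda_P(n)^2 n^{-2\sigma}$ valid for $\sigma > 1$; partial summation using $\sum_{n \leq x}\Lambda_P(n)^2 \gg x^{\alpha-\epsilon}$ then shows that this sum grows like $(\sigma - \alpha/2)^{-1+\epsilon}$ as $\sigma \to (\alpha/2)^+$. Because only finitely many singularities of $\zeta_P$ lie in $\{\Re s \geq \rho\}$, $F$ extends meromorphically across the critical strip with only finitely many poles, and a Montgomery--Vaughan type mean-value theorem---applied either to $F$ directly or to a smoothed truncation of its Dirichlet series---should transfer the lower bound to a vertical line $\Re s = \sigma_0 \in (\rho,1)$.

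For the \emph{upper bound}, I would use the Hadamard factorization of $\zeta_P$, which has genus $\leq \lceil d \rceil$ by the order hypothesis, and express $F(s)$ as a polynomial of degree $\leq \lceil d\rceil$ plus $\sum_{\sigma \in \Lambda_P}\bigl((s-\sigma)^{-1} + \text{compensating terms}\bigr)$. On the vertical line $\Re s = \sigma_0$ at height $|t| \asymp T$, the local density (\ref{zero1}) says that only $\ll T^{d_1+\epsilon}$ singularities lie within unit distance and contribute significantly; together with the global count (\ref{zero}) controlling the polynomial and tail pieces, I expect to obtain, after separating diagonal and off-diagonal contributions in $|F|^2$, a bound of the form $I(T,\sigma_0) \ll T^{d+d_1+\epsilon}$ for $\sigma_0 > \rho$.

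Comparing the two bounds at $\sigma_0 = \rho + \eta$ and optimising the height $T = T(\eta)$ as $\eta \to 0^+$ should then yield $d + d_1 \geq (2-2\rho)/(2-\alpha)$; the remaining bound $d+d_1 \geq 1$ follows from assumption (i), which forces $\zeta_P$ to have a simple pole at $s = 1$ and hence $d \geq 1$. The main technical obstacle is the upper bound: for classical $L$-functions ($d \leq 1$) it is a standard Hadamard-product estimate, but in the higher-order regime (such as Selberg-type zetas with $d = 2$) one must carefully separate the contribution of the local density $d_1$ from that of the global density $d$ and control the off-diagonal terms in $|F|^2$, so that the correct exponent $d+d_1$---rather than, say, $2d-1$ or $2d_1+1$---emerges.
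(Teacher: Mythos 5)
Your overall strategy---a second-moment comparison between a coefficient side and a zero side---is the right shape, and your upper-bound sketch (near-diagonal pairs controlled by the local density $d_1$, off-diagonal pairs by the global count $d$, giving $T^{d+d_1+\epsilon}$) is essentially what the paper does in Lemma \ref{lem3}. But there are two genuine gaps. First, the lower-bound ``transfer'' is the heart of the matter and cannot be delegated to a Montgomery--Vaughan mean-value theorem: the Plancherel identity $I(T,\sigma)/2T\to\sum_n\Lambda_P(n)^2n^{-2\sigma}$ holds only for $\sigma>1$ (the Dirichlet series of $F$ diverges at and below $\Re s=1$ because of the pole), and Montgomery--Vaughan applies to Dirichlet \emph{polynomials}, not to $F$ on a line $\Re s=\sigma_0<1$. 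To connect a truncated prime sum to the singularities you need an explicit formula; that is exactly the paper's Lemma \ref{lem1}, which equates a smoothly truncated sum $\sum_{p,k}\log N(p)\,N(p)^{-ks}v(N(p)^k/X)$ with a sum over all of $\Lambda_P$. The paper then sidesteps your $\sigma_0\to\rho^+$ limit entirely by taking the mean square on the line $\Re s=-u$ with $u>1$, far to the \emph{left} of the strip, where both sides are under absolute control; the truncation parameter $X$ (not the position of the line) carries the arithmetic information, the diagonal on the prime side produces $T\sum_{N<X}m(N)^2(\log N)^2N^{2u}\gg TX^{\alpha+2u-\epsilon}$, each singularity contributes $\ll X^{\rho+u}$, and the auxiliary $u$ cancels when the two bounds are compared with $T$ chosen as a power of $X$.

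Second, your derivation of $d+d_1\geq1$ is incorrect: the simple pole at $s=1$ does not force $d\geq1$, since a meromorphic function of order zero can perfectly well have a pole (order controls the growth of the counting function of singularities, and a single pole is compatible with $d=0$). In the paper this part of the inequality is \emph{not} a triviality; it comes from the same mean-square identity, by taking $T$ enormously larger than $X$ (e.g.\ $T=e^X$) so that the zero side forces $\phi_0(X)=o(1)$ if $d+d_1<1$, contradicting $\phi_0(X)\gg X^{\alpha-\epsilon}\geq X^{1-\epsilon}$, which uses the hypothesis $\alpha\geq1$. So both halves of the conclusion genuinely require the explicit-formula machinery that your plan leaves unspecified.
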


For the Riemann zeta function, since $d+d_1=1$ and $\alpha=1$, we have $\rho\geq1/2$. 
This is a well-known fact. 
Remark that if $\rho=1/2$ then the Riemann hypothesis would be true and 
if $\rho>1/2$ then it would be false. 
Such a situation is same for the Dedekind zeta functions (see, e.g. \cite{Na}).
On the other hand, for the Selberg zeta function associated with Riemann surfaces, 
since $d+d_1=3$ and $\rho=1/2$, we have $\alpha\leq 5/3$. 
This was proven in our previous work \cite{H}. 
Note that it has been expected that $\alpha\leq3/2$ for any volume finite Riemann surfaces 
and $\alpha=3/2$ for arithmetic surfaces. 
Then the inequality above might be able to be improved unconditionally or with some natural conditions.

We also note that the theorem above gives the following corollary.
\begin{cor}\label{cor} 
The zeta function $\zeta_P(s)$ has the following properties.\\
(1) The set of imaginary parts of the singular points of $\zeta_P(s)$ is unbounded ($\rho>-\infty$). \\
(2) If $d+d_1=1$ then $\rho\geq\alpha/2(\geq1/2)$.\\
(3) If $d+d_1=1$ and $\rho=1/2$ then $\alpha=1$.\\
(4) If $\alpha=2$ then $\rho=1$.
\end{cor}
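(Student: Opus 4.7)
The plan is to derive all four parts as direct algebraic consequences of the central inequality
\begin{equation*}
d+d_1\geq\max\Bigl\{1,\frac{2-2\rho}{2-\alpha}\Bigr\}
\end{equation*}
from Theorem~\ref{thm}, combined with the range constraints $1\leq\alpha\leq 2$ and $\rho\leq 1$ built into its hypotheses. I would first clear the denominator and work with the equivalent form
\begin{equation*}
(d+d_1)(2-\alpha)\geq 2(1-\rho),
\end{equation*}
which remains meaningful at the boundary $\alpha=2$ and avoids dealing with a degenerate denominator. The natural order in which to address the parts is (4), (2), (3), (1), because (1) reuses (4) to cover the case $\alpha=2$.

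For (4), setting $\alpha=2$ in the cleared inequality gives $0\geq 2(1-\rho)$, hence $\rho\geq 1$; together with the standing hypothesis $\rho\leq 1$ this forces $\rho=1$. For (2), substituting $d+d_1=1$ yields $2-\alpha\geq 2(1-\rho)$, which rearranges to $\rho\geq\alpha/2$; the additional bound $\rho\geq 1/2$ follows from the always-true inequality $\alpha\geq 1$, which is forced by the prime number theorem $\sum_{N<x}m(N)\sim\li(x)$ together with the pointwise bound $m(N)^2\geq m(N)$. Part (3) is then an immediate consequence: with $\rho=1/2$ and $d+d_1=1$, (2) gives $\alpha\leq 1$, and combining with $\alpha\geq 1$ yields $\alpha=1$.

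For (1), I would split into the subcases $\alpha<2$ and $\alpha=2$. In the former, the cleared inequality rearranges directly to
\begin{equation*}
\rho\geq 1-\tfrac{1}{2}(d+d_1)(2-\alpha),
\end{equation*}
a finite lower bound since $d$, $d_1$, and $2-\alpha$ are all finite. The latter subcase is exactly the content of (4). The only mild interpretational point, rather than a real obstacle, is that the parameter $\rho$ in Theorem~\ref{thm} is not unique: any sufficiently large $\rho\leq 1$ is admissible, and the substantive content of (1) is that the infimum $\rho^{*}$ of admissible values is finite. Since the inequality in Theorem~\ref{thm} holds for every admissible $\rho$, one may let $\rho\searrow\rho^{*}$ and transfer the finite lower bound to $\rho^{*}$. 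Beyond this small clarification, the corollary is a short exercise in rearranging a single inequality, and I do not foresee any serious technical obstruction.
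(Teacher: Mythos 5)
Your proposal is correct and matches the paper's (unwritten) argument: the paper simply states that the corollary ``can be obtained easily from \eqref{result}'', and your algebraic derivation---clearing the denominator to handle $\alpha=2$, noting $\alpha\geq 1$ via $m(N)^2\geq m(N)$ and the prime number theorem, and reading part (1) as finiteness of the infimum of admissible $\rho$---is exactly the intended routine manipulation, carried out with appropriate care at the boundary cases.
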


In other words, we can say that  
(1) every $\zeta_P(s)$ have ``non-trivial" zeros (or poles), 
(2) $\zeta_P(s)$ of order $1$  has infinitely many singular points 
in $\{1/2-\epsilon<\Re{s}<1\}$ for any $\epsilon>0$, 
(3) the Riemann hypothesis is not true for $\zeta_P(s)$ of order $1$ and of $\alpha>1$, 
(4) if the prime distribution in $\zeta_P(s)$ is of very high multiplicity 
then $\zeta_P(s)$ has infinitely many singular points near $\Re{s}=1$.  

\section{Proof of the theorem}

Let $v\in C^{\infty}(\bR_{>0})$ be a function satisfying $0\leq v(x)\leq 1$ for any $x>0$ 
and 
\begin{align*}
v(x)=\begin{cases} 1 & (0<x\leq 1), \\ 0 & (x\geq\min\{N(p)\divset p\in P\}=:\hat{N}). \end{cases}
\end{align*}
For $w(z):=\int_0^{\infty}-v'(x)x^zdx$, it is easy to see that   
\begin{align}\label{w}
w(z)=O\big(|z|^{-n}e^{\max(\Re{z},0)}\big)\as |z|\tinf
\end{align}
holds for any $n\geq1$, where the implied constant depends on $n$. 
We first state the following lemma.
\begin{lem}\label{lem1}
Let $X>0$ be a large number. Then we have 
\begin{align*}
\sum_{p\in P,k\geq1}\log{N(p)}N(p)^{-ks}v\Big(\frac{N(p)^{k}}{X}\Big)
=\sum_{\sigma\in\Lambda_P}\delta(\sigma)\frac{X^{\sigma-s}-1}{\sigma-s}w(\sigma-s),
\end{align*}
where $\delta(\sigma)=1$ when $\sigma$ is a zero and $\delta(\sigma)=-1$ when $\sigma$ is a pole.
\end{lem}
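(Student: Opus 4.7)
The plan is to prove the identity by Mellin inversion applied to the cutoff $v$, followed by a contour shift collecting the residues at the singular points of $\zeta_P$, with a small twist exploiting the fact that the left-hand side vanishes at $X=1$ to absorb the pole at $z=0$ and produce the $X^{\sigma-s}-1$ numerator.

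First I would record the analytic properties of $w$ and of the Mellin transform $\tilde v(z):=\int_0^\infty v(x)x^{z-1}\,dx$. Since $v'$ is supported on the compact set $[1,\hat N]$, the function $w(z)$ is entire, and integration by parts (using $v(0)=1$ and $v\equiv 0$ on $[\hat N,\infty)$) gives $\tilde v(z)=w(z)/z$ for $\Re z>0$.

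Applying Mellin inversion $v(y)=\frac{1}{2\pi i}\int_{(c)} (w(z)/z)\,y^{-z}\,dz$ to $y=N(p)^k/X$, summing over $(p,k)$, exchanging the order of summation and integration, and using $\sum_{p,k}\log N(p)\,N(p)^{-k(s+z)}=-\zeta_P'/\zeta_P(s+z)$ in the half-plane $\Re(s+z)>1$, one obtains for $c>\max\{0,1-\Re s\}$
\[
\text{LHS}=\frac{1}{2\pi i}\int_{(c)}\frac{w(z)\,X^z}{z}\Big(-\frac{\zeta_P'}{\zeta_P}(s+z)\Big)\,dz.
\]
The left-hand side vanishes at $X=1$, because $N(p)^k\geq N(p)\geq\hat N$ forces $v(N(p)^k)=0$; hence the integral at $X=1$ is also zero, and subtracting this instance from the general one gives
\[
\text{LHS}=\frac{1}{2\pi i}\int_{(c)}w(z)\,\frac{X^z-1}{z}\Big(-\frac{\zeta_P'}{\zeta_P}(s+z)\Big)\,dz.
\]
Now $(X^z-1)/z$ is entire, so the integrand has poles only at $z=\sigma-s$ for $\sigma\in\Lambda_P$, and a direct residue computation at each such pole (using that $\zeta_P'/\zeta_P$ has residue equal to the order of $\zeta_P$ at $\sigma$, signed according to $\delta$) produces exactly $\delta(\sigma)\,w(\sigma-s)(X^{\sigma-s}-1)/(\sigma-s)$, with multiplicities absorbed into the sum over $\Lambda_P$.

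I would then shift the contour leftward through a sequence of lines $\Re z=-T_j$ with $T_j\to\infty$, chosen — using the density bound \eqref{zero1} — to keep a uniform distance from every $\sigma-s$. The sum of residues produces the right-hand side of the lemma; the remaining task is to show the shifted-line and the connecting horizontal-segment integrals vanish in the limit. This is the main obstacle. On $\Re z\leq 0$ the factor $e^{\max(\Re z,0)}$ equals $1$, so \eqref{w} gives the super-polynomial bound $|w(z)|\ll|z|^{-n}$ for every $n$; meanwhile on the chosen vertical lines the logarithmic derivative $\zeta_P'/\zeta_P(s+z)$ has at worst polynomial growth in $|z|$, a standard consequence of the finite-order assumption (ii) via Hadamard factorization, combined with the local counting estimates \eqref{zero}, \eqref{zero1} used to bound contributions of nearby singularities. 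Since super-polynomial decay defeats polynomial growth (and $X^{\Re z}$ is harmless on the left), both the far-left vertical line and the horizontal connecting segments contribute $o(1)$ as $T_j\to\infty$, completing the proof.
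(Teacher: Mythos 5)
Your proposal is correct and follows essentially the same route as the paper: both evaluate the contour integral $\frac{1}{2\pi i}\int_{(c)}\bigl(-\tfrac{\zeta_P'}{\zeta_P}(s+z)\bigr)\tfrac{X^z-1}{z}w(z)\,dz$ once term-by-term via the Mellin transform $w(z)/z$ of $v$ (the paper absorbs the ``$-1$'' by noting $v(N(p)^k)=0$, you equivalently subtract the $X=1$ instance) and once by residues at $z=\sigma-s$. In fact you supply more detail than the paper does on justifying the leftward contour shift, which the paper dispatches with a bare appeal to the residue theorem.
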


\begin{proof}
We calculate the following integral.
\begin{align*}
J(X,s):=\frac{1}{2\pi i}\int_{\Re{z}=2}-\frac{\zeta_P'(z+s)}{\zeta_P(z+s)}\frac{X^z-1}{z}w(z)dz.
\end{align*}
We first get 
\begin{align}
J(X,s)=&\sum_{p\in P,k\geq1}\log{N(p)}N(p)^{-ks}
\frac{1}{2\pi i}\int_{\Re{z}=2}\Big\{\Big(\frac{X}{N(p)^k}\Big)^z-N(p)^{-kz}\Big\}
\frac{w(z)}{z}dy\nt\\
=&\sum_{p\in P,k\geq1}\log{N(p)}N(p)^{-ks}\Big\{v\Big(\frac{N(p)^k}{X}\Big)-v(N(p)^k)\Big\}\nt\\
=&\sum_{p\in P,k\geq1}\log{N(p)}N(p)^{-ks}v\Big(\frac{N(p)^k}{X}\Big).\label{left1}
\end{align}

On the other hand, by using the residue theorem, we have
\begin{align}\label{right1}
J(X,s)=\sum_{\sigma\in\Lambda_P}\delta(\sigma)\frac{X^{\sigma-s}-1}{\sigma-s}w(\sigma-s).
\end{align}
Then the claim of the lemma follows immediately from \eqref{left1} and \eqref{right1}.
\end{proof}

For simplicity, we express the formula in Lemma \ref{lem1} as $G(s,X)=I(s,X)$. 
Let $u>1$ be a number satisfying that
there exists a constant $c>0$ such that $\zeta_P(s)$ has no singular points 
in $\{|\Re{s}+u|<c\}$.  
Taking the integrals $\int_{-u+iT}^{-u+2iT}|*|^2ds$ of the both hand sides of the formula,
we get the following lemmas.

\begin{lem}\label{lem2}
Let $T>0$ be a large number and 
\begin{align*}
\phi_u(X):=&\sum_{\begin{subarray}{c}p_1,p_2\in P\\k_1,k_2\geq1\\ N(p_1)^{k_1}=N(p_2)^{k_2}\end{subarray}}
\frac{k_1}{k_2}(\log{N(p_1)})^2N(p_1)^{2ku}v\Big(\frac{N(p_1)^{k_1}}{X}\Big)^2.
\end{align*}
Then we have 
\begin{align*}
\int_{T}^{2T}|G(-u+it,X)|^2dt=T\phi_u(X)+O(X^{2+2u}).
\end{align*}
\end{lem}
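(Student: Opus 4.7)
The plan is to expand $|G(-u+it,X)|^2$ as a double sum, interchange summation with integration, and split the result into a diagonal contribution and an off-diagonal error. Setting
\[
a_{p,k}:=\log N(p)\cdot N(p)^{ku}\,v(N(p)^k/X),
\]
so that $G(-u+it,X)=\sum_{p,k}a_{p,k}N(p)^{-ikt}$, and noting that the sum is finite because $v$ is supported in $(0,\hat N)$ and the hypothesis $\sum_{p}N(p)^{-a}<\infty$ forces $\{p\in P:N(p)^k<\hat NX\}$ to be finite for each $k$, we obtain
\begin{align*}
|G(-u+it,X)|^2=\sum_{p_1,k_1,p_2,k_2}a_{p_1,k_1}a_{p_2,k_2}\,e^{it(k_2\log N(p_2)-k_1\log N(p_1))}.
\end{align*}
Term-by-term integration in $t$ is then justified, and reduces matters to the identities $\int_T^{2T}e^{it\beta}\,dt=T$ when $\beta=0$ and $O(1/|\beta|)$ otherwise.

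The diagonal contribution arises from quadruples with $N(p_1)^{k_1}=N(p_2)^{k_2}$, equivalently $k_1\log N(p_1)=k_2\log N(p_2)$. Under this constraint one has $N(p_1)^{k_1u}=N(p_2)^{k_2u}$ and $\log N(p_1)\log N(p_2)=(k_1/k_2)(\log N(p_1))^2$, so the diagonal sum collapses to exactly $T\phi_u(X)$.

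For the off-diagonal terms the $t$-integral contributes $O(1/|\log(N(p_1)^{k_1}/N(p_2)^{k_2})|)$, producing an error independent of $T$. Writing $c(M):=\sum_{(p,k):\,N(p)^k=M}\log N(p)\cdot M^uv(M/X)$, the remaining task is to bound $\sum_{M_1\neq M_2}c(M_1)c(M_2)/|\log(M_1/M_2)|$ by $O(X^{2+2u})$. Pairs of norms lying in well-separated dyadic ranges contribute $O((\sum_M c(M))^2)=O(X^{2u+2})$ via the prime number theorem, which yields $\sum_M c(M)\ll X^{u+1}$. For pairs in a common dyadic block $[Y,2Y]$, where $|\log(M_1/M_2)|\asymp|M_1-M_2|/Y$, one applies a Hilbert--Montgomery--Vaughan-type inequality, reducing the sum to $\sum_j|a_j|^2/\delta_j$ with $\delta_j$ the local spacing of the frequencies $\lambda_j=k\log N(p)$.

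The main obstacle is this close-pairs estimate: it hinges on a lower bound $\delta_j\gg N(p)^{-k}$ for the local spacing among distinct norms. Granted such a bound, partial summation gives
\[
\sum_j\frac{|a_j|^2}{\delta_j}\ll\sum_{N(p)^k\leq\hat NX}(\log N(p))^2\,N(p)^{k(2u+1)}\ll X^{2u+2},
\]
matching the claimed error. In the Riemann/Dedekind case the spacing is automatic from the integrality of prime powers; in the general meromorphic setting of the paper it is available through the PNT-type asymptotic implied by assumption (i).
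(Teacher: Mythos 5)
Your overall strategy coincides with the paper's: expand the square, integrate termwise, identify the diagonal $N(p_1)^{k_1}=N(p_2)^{k_2}$ with $T\phi_u(X)$ (your computation of the diagonal weight is correct), and split the off-diagonal terms according to whether the two norms are within a bounded ratio of each other, the separated pairs being controlled by $(\sum_M c(M))^2\ll X^{2+2u}$ exactly as in the paper's sum $S_{22}$. The difference, and the problem, lies in the close pairs. First, your justification of the spacing bound $\delta_j\gg N(p)^{-k}$ from assumption (i) does not work: the prime-number-theorem asymptotic controls the counting function of norms, hence their \emph{average} gap, but gives no lower bound whatsoever on the minimal gap between two distinct norms; in the Selberg setting the norms are of the form $e^{\ell}$ for geodesic lengths $\ell$, and nothing prevents two distinct norms below $X$ from being closer than any prescribed function of $X$. (To be fair, the paper's own estimate of its close-pair sum $S_{21}$ by $O(N(p_1)^{k_1(1+2u)})$ tacitly requires the same separation, so this is a weakness you inherit rather than introduce.)

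Second, and this gap is yours alone: the Montgomery--Vaughan inequality must be applied to \emph{distinct} frequencies, so the coefficient attached to the frequency $\log M$ is $c(M)=\sum_{(p,k):\,N(p)^k=M}\log N(p)\cdot M^u v(M/X)$, which carries the full multiplicity $m(M)$. The resulting error is $\sum_M c(M)^2/\delta_M\asymp\sum_{M<\hat NX}m(M)^2(\log M)^2M^{2u+1}\asymp X^{\alpha+2u+1}$ under the hypothesis $\sum_{M<x}m(M)^2\asymp x^{\alpha}$; your final display silently replaces $m(M)^2$ by $m(M)$ by summing over individual pairs $(p,k)$. The correct MV bound $X^{\alpha+2u+1}$ exceeds the required $O(X^{2+2u})$ precisely when $\alpha>1$, which is the regime of unbounded multiplicities the lemma is designed for, and with that weaker error the choice $T=X^{(1-\rho)/(d+d_1)}$ in the proof of the theorem no longer lets the diagonal dominate. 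The paper escapes this by never symmetrizing: for each fixed $N(p_1)^{k_1}=M_1$ it bounds the inner sum over nearby $N(p_2)^{k_2}$ using only the density (first moment) of norms, obtaining $O(M_1^{1+2u})$, and the outer sum is again a first-moment sum $\sum_{M_1}m(M_1)\log M_1\cdot M_1^{1+2u}\ll X^{2+2u}$. You should replace the MV step by this asymmetric double first-moment estimate.
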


\begin{proof}
Directly calculating the integral, we have
\begin{align*}
\int_{T}^{2T}|G(-u+it,X)|^2dt
=&\sum_{\begin{subarray}{c}p_1,p_2\in P\\ k_1,k_2\geq1\end{subarray}}
\log{N(p_1)}\log{N(p_2)}N(p_1)^{k_1u}N(p_2)^{k_2u}\\
&\times v\Big(\frac{N(p_1)^{k_1}}{X}\Big)v\Big(\frac{N(p_2)^{k_2}}{X}\Big).
\int_{T}^{2T}\Big(\frac{N(p_2)^{k_2}}{N(p_1)^{k_1}}\Big)^{it}dt.
\end{align*}
Divide the sum above as follows.
\begin{align*}
\sum_{\begin{subarray}{c}p_1,p_2\in P\\ k_1,k_2\geq1\end{subarray}}
=\sum_{N(p_1)^{k_1}=N(p_2)^{k_2}}+\sum_{N(p_1)^{k_1}\neq N(p_2)^{k_2}}=:S_1+S_2.
\end{align*}
It is easy to see that the first sum $S_1$ is written by $S_1=T\phi_u(X)$.
Next we estimate $S_2$.
\begin{align*}
|S_2|\leq&\sum_{N(p_1)^{k_1}<N(p_2)^{k_2}<\hat{N}X}
\log{N(p_1)}\log{N(p_2)}N(p_1)^{k_1u}N(p_2)^{k_2u}\\
&\times \frac{\sin{\big(2T\log{\big(N(p_2)^{k_2}/N(p_1)^{k_1}\big)}\big)}
-\sin{\big(T\log{\big(N(p_2)^{k_2}/N(p_1)^{k_1}\big)}\big)}}
{\log{\big(N(p_2)^{k_2}/N(p_1)^{k_1}\big)}}\\
\leq&\sum_{N(p_1)^{k_1}<N(p_2)^{k_2}<\hat{N}X}\log{N(p_1)}\log{N(p_2)}
\frac{N(p_1)^{k_1(u+1)}N(p_2)^{k_2u}}{N(p_2)^{k_2}-N(p_1)^{k_1}}.
\end{align*}
We furthermore divide the sum above as follows.
\begin{align*}
\sum_{N(p_1)^{k_1}<N(p_2)^{k_2}<\hat{N}X}
=\sum_{N(p_1)^{k_1}<N(p_2)^{k_2}<\min{\big(2N(p_1)^{k_1},\hat{N}X\big)}}
+\sum_{2N(p_1)^{k_1}<N(p_2)^{k_2}<\hat{N}X}=:S_{21}+S_{22}.
\end{align*}
The later sum is estimated by 
\begin{align}
S_{22}=&\sum_{N(p_1)^{k_1}<\hat{N}X}\log{N(p_1)}\sum_{2N(p_1)^{k_1}<N(p_2)^{k_2}<\hat{N}X}
\log{N(p_2)}\frac{N(p_1)^{k_1(u+1)}N(p_2)^{k_2u}}{N(p_2)^{k_2}-N(p_1)^{k_1}}\nt\\
\leq &\sum_{N(p_1)^{k_1}<\hat{N}X}\log{N(p_1)}N(p_1)^{k_1u}
\sum_{N(p_2)^{k_2}<\hat{N}X}\log{N(p_2)}N(p_2)^{k_2u}=O(X^{2+2u}).
\end{align}
We estimate the former sum as follows.
\begin{align}
|S_{21}|\leq&\sum_{N(p_1)^{k_1}<\hat{N}X}\log{N(p_1)}
\sum_{N(p_1)^{k_1}<N(p_2)^{k_2}<2N(p_1)^{k_1}}
\log{N(p_2)}\frac{N(p_1)^{k_1(u+1)}N(p_2)^{k_2u}}{N(p_2)^{k_2}-N(p_1)^{k_1}}\nt\\
\leq &\sum_{N(p_1)^{k_1}<\hat{N}X}\log{N(p_1)}O\big(N(p_1)^{k_1(1+2u)}\big)
=O(X^{2+2u}).
\end{align}
This completes Lemma \ref{lem2}.
\end{proof}

\begin{lem}\label{lem3}
Let $U>0$ be a large number such that $U=o(T)$ as $T\tinf$ and $\rho$ a constant 
such that $0<\rho\leq 1$ and $\zeta_P(s)$ has at most finite number of singular points 
in $\{\Re{s}\geq\rho\}$.
Then, for any $n\geq1$, we have
\begin{align*}
\int_{T}^{2T}|I(-u+it,X)|^2dt
&=O(T^{-2n+1}X^{2+2u})+O(T^{-n+d+1+\epsilon}X^{1+\rho+2u})\\
&+O\big(T^{2d+\epsilon}U^{-n}X^{2(\rho+u)}\big)+O\big(T^{d+d_1+\epsilon}UX^{2(\rho+u)}\big),
\end{align*}
where the implied constants depend on $n$.
\end{lem}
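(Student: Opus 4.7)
Write $s=-u+it$ and $B_\sigma(s):=\frac{X^{\sigma-s}-1}{\sigma-s}w(\sigma-s)$, so that $I(s,X)=\sum_{\sigma\in\Lambda_P}\delta(\sigma)B_\sigma(s)$. The plan is to partition $\Lambda_P$ into pieces suited to the a priori estimates \eqref{w}, \eqref{zero}, and \eqref{zero1}, and control each piece's contribution to $\int_T^{2T}|I|^2\,dt$ separately. Let $F:=\{\sigma\in\Lambda_P\divset \Re\sigma\geq\rho\}$, which is finite by assumption; for each $t\in[T,2T]$, further split $\Lambda_P\setminus F$ into a close part $R_{\mr{near}}(t):=\{\sigma\in\Lambda_P\setminus F\divset|\Im\sigma-t|\leq U\}$ and a far part $R_{\mr{far}}(t):=(\Lambda_P\setminus F)\setminus R_{\mr{near}}(t)$. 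The four error terms in the statement should correspond, respectively, to $\int|I_F|^2$, the cross term $2\Re\int I_F\,\overline{I_{R_{\mr{near}}}+I_{R_{\mr{far}}}}$, $\int|I_{R_{\mr{far}}}|^2$, and $\int|I_{R_{\mr{near}}}|^2$.

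For $I_F$, each $\sigma\in F$ satisfies $\Re\sigma\leq 1$ (the rightmost singular point is the pole at $s=1$) and $|\sigma-s|\gg T$ for $t\in[T,2T]$, so \eqref{w} gives $|w(\sigma-s)|\ll T^{-n}$ while $|X^{\sigma-s}|\ll X^{1+u}$. Finiteness of $F$ then yields the pointwise bound $|I_F(t)|\ll T^{-n}X^{1+u}$; squaring and integrating produces the first error term. For $I_{R_{\mr{far}}}$, the constraint $|\sigma-s|>U$ together with \eqref{w} gives $|w(\sigma-s)|\ll|\sigma-s|^{-n}$, while $\Re\sigma<\rho$ gives $|X^{\sigma-s}|\leq X^{\rho+u}$; a dyadic sum over shells $|\sigma-s|\in[2^kU,2^{k+1}U)$, with cardinalities controlled by the global count \eqref{zero}, yields $|I_{R_{\mr{far}}}(t)|\ll T^{d+\epsilon}U^{-n}X^{\rho+u}$, matching the third error term after squaring and integrating.

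The $I_{R_{\mr{near}}}$ piece is the delicate one. Here $|\sigma-s|$ is bounded below only by the gap constant $c$, so \eqref{w} yields merely $|B_\sigma(t)|\ll X^{\rho+u}$ with no useful decay in $|\sigma-s|$. By \eqref{zero1}, $\#R_{\mr{near}}(t)\ll UT^{d_1+\epsilon}$. Applying Cauchy--Schwarz in the form $|I_{R_{\mr{near}}}(t)|^2\leq \#R_{\mr{near}}(t)\cdot\sum_{\sigma\in R_{\mr{near}}(t)}|B_\sigma(t)|^2$, then swapping summation and integration and using \eqref{zero} to bound the total number of $\sigma$ ever contributing (that is, with $|\Im\sigma|\ll T$) by $\ll T^{d+\epsilon}$, I would obtain the fourth term $O(T^{d+d_1+\epsilon}UX^{2(\rho+u)})$. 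The cross term between $I_F$ and the remainder is then handled by combining $|I_F|_{L^\infty[T,2T]}\ll T^{-n}X^{1+u}$ with the integrated bound $\int_T^{2T}\sum_{\sigma\in\Lambda_P\setminus F}|B_\sigma(t)|\,dt\ll T^{d+1+\epsilon}X^{\rho+u}$ (from uniform $|B_\sigma|\ll X^{\rho+u}$ and \eqref{zero}), giving the second error term.

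\textbf{Main obstacle.} The critical step is the estimate for $R_{\mr{near}}$: because $|\sigma-s|\geq c$ is the only available lower bound, the kernel $w$ contributes no polynomial decay and the argument reduces to pure counting. Using Cauchy--Schwarz as $\#\cdot\sum|B_\sigma|^2$ rather than the naive $(\sum|B_\sigma|)^2$ is essential --- the latter would yield a $U^2T^{2d_1+\epsilon}$-type bound and spoil the subsequent optimization over $U$ in the proof of Theorem \ref{thm}, whereas the former produces the correct linear $U$-dependence in the fourth error term. Correctly pairing the two counts, $\#R_{\mr{near}}\ll UT^{d_1+\epsilon}$ from \eqref{zero1} and the total index bound $\ll T^{d+\epsilon}$ from \eqref{zero}, is what makes the combined $T^{d+d_1+\epsilon}$ exponent appear.
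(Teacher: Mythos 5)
Your decomposition is genuinely different from the paper's, and most of it is sound. The paper never takes pointwise bounds on the high-lying part of $I$: it expands $\int_T^{2T}|I_1|^2\,dt$ into a double sum over pairs $(\sigma_1,\sigma_2)$ of singular points, integrates each term in $t$ first (each $t$-integral is $O(1)$ because $|c+i(\beta-t)|^{-n}$ is integrable), and only then splits the \emph{pairs} according to $|\beta_1-\beta_2|<U$ or $\geq U$ --- a $t$-independent dichotomy on pairs, whereas yours is a $t$-dependent dichotomy on individual singular points. Your near-field treatment (Cauchy--Schwarz against the local count $UT^{d_1+\epsilon}$ from \eqref{zero1}, then swapping $\sum$ and $\int$ and using the global count $T^{d+\epsilon}$) is a correct equivalent of the paper's diagonal count $L_1$ and yields exactly the fourth term; your diagnosis that the $\#\cdot\sum|B_\sigma|^2$ form of Cauchy--Schwarz is the crux is also the right one. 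The first and second terms come out the same way in both arguments. (Both you and the paper gloss over singular points with very negative real part, for which \eqref{zero1} does not directly control the count in a horizontal strip; the paper's restriction $\Re{\sigma}>-T$ in the definition of $\Lambda'$ is the gesture toward this, with the decay of $w$ in $|\Re(\sigma-s)|$ handling what remains.)

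The one place your route does not deliver the stated bound is the far field. Squaring the pointwise estimate $|I_{R_{\mathrm{far}}}(t)|\ll T^{d+\epsilon}U^{-n}X^{\rho+u}$ and integrating over an interval of length $T$ gives $O(T^{2d+1+\epsilon}U^{-2n}X^{2(\rho+u)})$, which exceeds the claimed third term by a factor $TU^{-n}$; this factor is $\leq 1$ only when $U\geq T^{1/n}$, while the lemma is asserted for every large $U=o(T)$ and is in fact applied with $U=T^{(d-d_1)/(n+1)}$, which is below $T^{1/n}$ whenever $d-d_1\leq 1$ (e.g.\ the Selberg case $d=2$, $d_1=1$). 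The loss comes precisely from replacing the $t$-integral by $T$ times the squared sup-norm. To recover the stated exponent, argue as the paper does: expand $|I_{R_{\mathrm{far}}}|^2$ as a sum over pairs, use $|a|^{-n}|b|^{-n}\ll|a+b|^{-n}\big(|a|^{-1}+|b|^{-1}\big)^{n}$ with $a=c+i(\beta_1-t)$ and $b=c-i(\beta_2-t)$, so that the separation $|\beta_1-\beta_2|\geq U$ produces $U^{-n}$ while the remaining $t$-integral is $O(1)$ per pair, and then count $O(T^{2d+\epsilon})$ pairs. Alternatively, note that your weaker bound still suffices for Theorem \ref{thm}, since there $n$ is arbitrary and the far-field term is negligible for any $U$ that is a positive power of $T$; but as a proof of the lemma exactly as stated it falls short.
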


\begin{proof}
Let $T':=\max\{|\Im{\sigma}| \bigdivset \sigma\in\Lambda_P,\Re{\sigma}\geq\rho\}$ 
and $T'':=10T'+1000$. 
We denote by $\Lambda'$  the set of $\sigma\in\Lambda_P$ satisfying $|\Im{\sigma}|>T''$ 
and $\Re{\sigma}>-T$. 
According to \eqref{w}, we have
\begin{align*}
|I(-u+it,X)|&\leq
\sum_{\begin{subarray}{c} \sigma\in\Lambda_P \\ \sigma=\alpha+i\beta \end{subarray}}
O\Big(X^{\max{(\alpha+u,0)}}|u+\alpha+i(\beta- t)^{-n}\Big)\\
&=\sum_{\begin{subarray}{c} \sigma\in\Lambda' \\ \sigma=\alpha+i\beta \end{subarray}}
O\Big(X^{\rho+u}|\delta+i(\beta-t)|^{-n}\Big)+O(X^{1+u}|t- T''|^{-n})\\
&=:I_1(t,X)+I_2(t,X).
\end{align*}
Take $T\gg T''$.
It is easy to see that 
\begin{align}\label{i22}
&\int_{T}^{2T}|I_2(t,X)|^2dt=O(T^{-2n+1}X^{2+2u}).
\end{align}

Next, we have 
\begin{align}\label{i12}
\int_T^{2T}\Re\big\{I_1(t,X)I_2(t,X)\big\}dt
=&\sum_{\sigma=\alpha+i\beta}
\int_T^{2T}O\big(X^{1+\rho+2u}|t|^{-n}|c+i(\beta-t)|^{-n}\big)dt\nt\\
=&\sum_{T/2<\beta<3T}O(T^{-n+1}X^{1+\rho+2u})
+\sum_{\text{other $\sigma$}}O(\beta^{-n}T^{-n+1}X^{1+\rho+2u})\nt\\
=&O(T^{-n+d+1+\epsilon}X^{1+\rho+2u}).
\end{align}

The remaining part of the proof is the estimation of the following integral.
\begin{align*}
&\int_T^{2T}|I_1(t,X)|^2dt=
\sum_{\begin{subarray}{c} \sigma_1=\alpha_1+i\beta_1 \\ \sigma_2=\alpha_2+i\beta_2 \end{subarray}}
\int_T^{2T}
O\Big(X^{2(\rho+u)}|c+i(\beta_1- t)|^{-n}|c-i(\beta_2-t)|^{-n}\Big)dt.
\end{align*}
Divide the sum above as follows.
\begin{align*}
\sum_{\beta_1,\beta_2}
=\sum_{\begin{subarray}{c}T/2<\beta_1,\beta_2<3T\\|\beta_1-\beta_2|<U\end{subarray}}
+\sum_{\begin{subarray}{c}T/2<\beta_1,\beta_2<3T\\|\beta_1-\beta_2|\geq U\end{subarray}}
+\sum_{\text{other $\sigma_1,\sigma_2$}}=:L_1+L_2+L_3.
\end{align*}
It is easy to see that $L_3=O(X^{2(\rho+u)})$. 
The second sum $L_2$ is estimated by 
\begin{align}
|L_2|=&\sum_{\begin{subarray}{c}T/2<\beta_1,\beta_2<3T\\|\beta_1-\beta_2|\geq U\end{subarray}}
\int_T^{2T}
O\Big(X^{2(\rho+u)}|2c+i(\beta_1- \beta_2)|^{-n}\times\nt\\ 
&\times\big(|c+i(\beta_1- t)|^{-1}+|c-i(\beta_2-t)|^{-1}\big)^{n}\Big)dt
=O(T^{2d+\epsilon}U^{-n}X^{2(\rho+u)}).
\end{align}
We can estimate the first sum $L_1$ by 
\begin{align}
|L_1|=&\sum_{\begin{subarray}{c}T/2<\beta_1,\beta_2<3T\\|\beta_1-\beta_2|<U\end{subarray}}
O(X^{2(\rho+u)})=O(T^{d+d_1+\epsilon}UX^{2(\rho+u)}).
\end{align}
Then we get the desired result.
\end{proof}

\noi \textit{Proof of Theorem \ref{thm}.} 
Due to Lemma \ref{lem2} and \ref{lem3}, we have
\begin{align*}
\phi_u(X)=O(T^{-1}X^{2+2u})+O(T^{2d-1+\epsilon}U^{-n}X^{2(\rho+u)})
+O(T^{d+d_1-1+\epsilon}UX^{2(\rho+u)}).
\end{align*}
Put $U=T^{\frac{d-d_1}{n+1}}$ into the above. 
Since $n$ can be taken arbitrary, we get
\begin{align*}
\phi_u(X)=O(T^{-1}X^{2+2u})+O(T^{d+d_1-1+\epsilon}X^{2(\rho+u)}).
\end{align*} 

If $d+d_1<1$ then we have $\phi_0(X)=o(1)$ as $X\tinf$ by taking $T$ sufficiently larger than $X$, 
for example $T=e^X$. However this contradicts to 
\begin{align}\label{lower}
\phi_u(X)\gg\sum_{\begin{subarray}{c}N\in\Norm(P)\\ N<X\end{subarray}}m(N)^2(\log{N})^2N^{2u}
\gg X^{\alpha+2u-\epsilon}.
\end{align} 
Thus $d+d_1\geq1$.

Taking $T=X^{\frac{1-\rho}{d+d_1}}$, we have 
\begin{align}\label{upper}
\phi_u(X)=O\Big(X^{2-\frac{2(1-\rho)}{d+d_1}+2u+\epsilon}\Big).
\end{align}
Combining \eqref{lower} and \eqref{upper}, we can easily obtain 
\begin{align}\label{result}
d+d_1\geq\frac{2-2\rho}{2-\alpha}.
\end{align}
This completes the proof of the theorem. 
The corollaries can be obtained easily from \eqref{result}.
\qed

\noindent 
\text{HASHIMOTO, Yasufumi}\\ 
Institute of Systems and Information Technologies/KYUSHU,\\
7F 2-1-22, Momochihama, Fukuoka 814-0001, JAPAN\\
e-mail:hasimoto@isit.or.jp

\end{document}